\documentclass[12pt, reqno]{amsart}
\usepackage{amsmath, amsthm, amscd, amsfonts, amssymb, mathtools, color, hyperref}
\usepackage[dvipsnames]{xcolor}
\usepackage{tikz}

\newtheorem{theorem}{Theorem}[section]
\newtheorem{lemma}[theorem]{Lemma}
\newtheorem{proposition}[theorem]{Proposition}

\theoremstyle{definition}

\newtheorem{example}[theorem]{Example}

\theoremstyle{remark}
\newtheorem{remark}[theorem]{Remark}
\numberwithin{equation}{section}

\begin{document}
	\setcounter{page}{1}
	
	\title[Singular difference graphs]
	{Singular difference graphs of vector spaces of square matrices}
	
	\author[Shrinath]{ Shrinath Hadimani }
	\address{Manipal Institute of Technology, Manipal Academy of Higher Education\\ 
		Manipal, India.}
	\email{shrinath.hadimani@manipal.edu}

	\subjclass[2020]{05C25, 15A03, 05C69, 05C40, 05C45}
	
	\keywords{singular difference graph; finite fields; independence number; clique number; domination number; Eulerian graphs.}

	\begin{abstract}
		The singular difference graph, denoted by $\Gamma$, of the vector space of square matrices over a field 
		is a graph whose vertex set is the set of 
		all elements of the vector space, where two distinct vertices are adjacent if and only if the difference 
		of the corresponding matrices is singular. In this paper, we investigate fundamental 
		graph-theoretic properties of $\Gamma$, including connectivity, diameter, regularity,
		the Eulerian property, independence number, clique number, and domination
		number. We show that $\Gamma$ is a connected regular graph with diameter two. Over finite 
		fields, we obtain an explicit formula for the
		degree of each vertex and characterize precisely when $\Gamma$ is
		Eulerian. We determine the independence number and clique number and
		provide explicit constructions attaining these values using companion
		matrices of irreducible polynomials. We also construct an explicit
		dominating set, yielding an upper bound for the domination number.
	\end{abstract}

	\maketitle
	
	\section{Introduction}
	Graphs defined on algebraic structures, particularly vector spaces and
	matrix rings over finite fields, have been studied extensively to explore 
	interplay between algebraic properties and graph-theoretic parameters.
	Notable examples include zero-divisor graphs of rings
	\cite{Anderson}, commuting graphs of groups and rings
	\cite{Akbari, Iranmanesh}, and unit graphs of rings \cite{Ashrafi}. These
	graphs provide connections between algebraic properties and graph-theoretic
	parameters such as connectivity, diameter, clique number, and domination
	number.
	
	Graphs defined on matrix rings over fields have received particular
	attention. For instance, the commuting graph of square matrices, whose
	vertices are matrices and whose edges correspond to pairs of commuting
	matrices, has been studied with respect to its connectivity and diameter in 
	\cite{Akbari}. The non-commuting graph associated with a non-abelian group, in which two
	distinct noncentral elements are adjacent if and only if they do not commute,
	was introduced and studied in \cite{Abdollahi}. Zero-divisor graphs of finite rings have also 
	been studied, including those associated with matrix rings, with attention 
	to their structural and graph-theoretic properties \cite{Akbari-Mohammadian}. 
	More recently, compressed zero-divisor graphs associated with matrix
	rings over finite fields have been studied in terms of their structural
	and graph-theoretic properties \cite{Duric}. Graphs constructed from vector 
	spaces over finite fields have also been studied from various perspectives. 
	For example, the nonzero component graph define adjacency according to the
	presence of nonzero components in vectors \cite{Das-1, Das-2}. Adjacency
	preserving maps on matrices and operators have been investigated to
	understand transformations that preserve adjacency relations
	\cite{Petek-Semrl}. These studies demonstrate the usefulness of
	graph-theoretic methods in understanding algebraic structures associated
	with matrices.
	
	In this paper, we introduce the singular difference graph of the vector space 
	of square matrices over a field. In this graph, two distinct matrices
	are adjacent if and only if their difference is singular. We investigate
	graph-theoretic properties arising from this singularity relation.
	The study of independence and clique numbers is closely connected
	with the structure of linear subspaces of matrices and the use of
	companion matrices of irreducible polynomials. The domination number presents 
	a different type of problem, since its determination requires finding a smallest 
	set of matrices that meets the singularity condition with every matrix outside of 
	the set. We give an explicit construction of such a set, which provides an upper 
	bound for the domination number. The problem of determining the exact domination 
	number remains unresolved.
	
	\section{Definitions and preliminaries}\label{sec-Intro}
	In this section, we recall the basic definitions, notation, and preliminary results 
	from graph theory and linear algebra that will be used throughout this paper. We 
	begin with some standard concepts from graph theory.
	
	\subsection{Preliminaries on graph theory}
	
	A (simple) graph is an ordered pair $G=(V,E)$, where $V$ is a nonempty set of 
	vertices and $E \subseteq \{\{u,v\}:u,v\in V,\ u\neq v\}$ is the set of edges. 
	Two distinct vertices $u,v\in V$ are said to be adjacent if $\{u,v\}\in E$, in 
	which case we write $u\sim v$. If $\{u,v\}\notin E$, then $u$ and $v$ are said 
	to be nonadjacent, and we write $u\nsim v$. For a vertex $v \in V$, the \emph{degree}
	of $v$, denoted by $\deg(v)$, is equal to the number of vertices adjacent to it.
	A graph $G$ is said to be \emph{regular} if every vertex of $G$
	has the same degree. A graph is 
	said to be \emph{finite} if its vertex set is finite. Otherwise, it is called 
	\emph{infinite}. The \emph{complement} of a graph $G=(V,E)$ is the graph 
	$\overline{G}=(V,\overline{E})$ with the same vertex set as $G$, where two 
	distinct vertices are adjacent in $\overline{G}$ if and only if they are 
	nonadjacent in $G$. 
	
	A \emph{walk} in $G$ is a sequence of vertices $v_0,v_1,\ldots,v_k$ such that $v_{i-1}$ and $v_i$ are adjacent for every $i=1,2,\ldots,k$. Note that the vertices and edges may be repeated in a walk. A \emph{trail} is a walk in which no edge is repeated, although vertices may be repeated. A \emph{path} is a walk in which no vertex is repeated. The \emph{length of a path} is the number of edges in it. A graph $G$ is said to be \emph{connected} if, for every pair of
	vertices $u,v\in V$, there exists a path in $G$ joining $u$ and $v$. For two vertices $u,v\in V$, the \emph{distance} between $u$ and $v$,
	denoted by $d(u,v)$, is the length of a shortest path joining $u$ and
	$v$. The \emph{diameter} of a connected graph $G$, denoted by
	$\operatorname{diam}(G)$, is the maximum distance between any two
	vertices of $G$. A connected graph is called \emph{Eulerian} if it
	contains a closed trail that contains every edge of the graph exactly once.
	Equivalently, a connected graph is Eulerian if and only if every vertex
	has even degree.
	
	For a vertex $v \in V$, the \emph{open neighbourhood} of $v$ is defined by
	$N(v)=\{u\in V: u\sim v \}$. 
	A subset $\mathcal{I} \subseteq V$ is called an \emph{independent set} if no 
	two distinct vertices in $\mathcal{I}$ are adjacent. The maximum cardinality 
	of an independent set in $G$ is called the \emph{independence number} of $G$ 
	and is denoted by $\alpha(G)$. A subset $\mathcal{D} \subseteq V$ is called a 
	\emph{dominating set} of $G$ if every vertex in
	$V\setminus \mathcal{D}$ is adjacent to at least one vertex of $\mathcal{D}$. 
	The minimum cardinality of a dominating set is called the \emph{domination number} 
	of $G$ and is denoted by $\gamma(G)$. A \emph{clique} in a graph $G=(V,E)$ is a subset 
	$\mathcal{C} \subseteq V$ such that every pair of distinct vertices in $\mathcal{C} $ 
	is adjacent. The maximum cardinality of a clique in $G$ is called the 
	\emph{clique number} of $G$, denoted by
	$\omega(G)$. Throughout this paper, the cardinality of a set $X$ is denoted by $|X|$.

	\subsection{Preliminaries on finite fields}
	
	Let $\mathbb{F}$ be a field. We denote by $M_n(\mathbb{F})$ the vector space of all $n\times n$ matrices over $\mathbb{F}$. Furthermore, let $\mathbb{F}[x]$ denote the polynomial ring in the indeterminate $x$ with coefficients in $\mathbb{F}$. A nonconstant polynomial $p(x)\in\mathbb{F}[x]$
	is said to be \emph{irreducible over $\mathbb{F}$} if it cannot be expressed as 
	a product of two nonconstant polynomials in $\mathbb{F}[x]$. 
	
	It is well known that 
	for every prime power $q=p^k$, where $p$ is a prime and $k$ is a positive integer, 
	there exists a unique finite field, up to isomorphism, with exactly $q$ elements. 
	This field is denoted by $\mathbb{F}_q$. For every positive integer $n$, 
	there exists a monic irreducible polynomial of degree $n$ over $\mathbb{F}_q$. 
	Accordingly, let $p(x)=x^n+a_{n-1}x^{n-1}+\cdots+a_1x+a_0$
	be a monic irreducible polynomial over $\mathbb{F}_q$. The \emph{companion matrix} 
	of $p(x)$ is defined by
	$C=
	\begin{pmatrix}
		0 & 1 & 0 & \cdots & 0\\
		0 & 0 & 1 & \cdots & 0\\
		\vdots & \vdots & \vdots & \ddots & \vdots\\
		0 & 0 & 0 & \cdots & 1\\
		-a_0 & -a_1 & -a_2 & \cdots & -a_{n-1}
	\end{pmatrix}$.
	The characteristic polynomial of $C$ is $p(x)$. Since $p(x)$ is 
	irreducible, the minimal polynomial of $C$ also equals $p(x)$. Consequently, the matrices
	$I,\ C,\ C^2,\ \ldots,\ C^{n-1}$ are linearly independent over $\mathbb{F}_q$.
	
	\section{Singular difference graph of the vector space $M_n(\mathbb{F})$}\label{sec-Results}
	
	In this section, we define singular difference graph of the vector space $M_n(\mathbb{F})$, 
	where $n \geq 2$, and provide an example. 
	
	The \emph{singular difference graph} of the vector space $M_n(\mathbb{F})$, denoted by $\Gamma=(V,E)$, is the graph with 
	vertex set $V=M_n(\mathbb{F})$,
	and two distinct vertices $A,B \in M_n(\mathbb{F})$ are adjacent if and only if $\det(A-B)=0$.
	Equivalently, $A\sim B$ if and only if $\det(A-B)=0$.
	
	We now illustrate this definition with an example.
	
	\begin{example}\label{ex-1}
		Consider the singular difference graph of $M_2(\mathbb{F}_2)$, where $\mathbb{F}_2= \mathbb{Z}_2 = \{ 0, 1\}$ is the finite field with two elements, with addition and multiplication
		performed modulo $2$.
		Since $|M_2(\mathbb{F}_2)|=2^4=16$, the graph has 16 vertices (see Figure \ref{fig:sdg-2-2}). Notice that the zero 
		matrix is adjacent precisely to the nonzero singular
		matrices in $M_2(\mathbb{F}_2)$.

		\begin{center}
			\begin{figure*}[!t]
				\centering
				\resizebox{0.6\linewidth}{!}{%
					\begin{tikzpicture}[
						edge/.style={gray!55, line width=0.35pt}
						]
\coordinate (v0) at (0.0:5.2cm);
\coordinate (v1) at (22.5:5.2cm);
\coordinate (v2) at (45.0:5.2cm);
\coordinate (v3) at (67.5:5.2cm);
\coordinate (v4) at (90.0:5.2cm);
\coordinate (v5) at (112.5:5.2cm);
\coordinate (v6) at (135.0:5.2cm);
\coordinate (v7) at (157.5:5.2cm);
\coordinate (v8) at (180.0:5.2cm);
\coordinate (v9) at (202.5:5.2cm);
\coordinate (v10) at (225.0:5.2cm);
\coordinate (v11) at (247.5:5.2cm);
\coordinate (v12) at (270.0:5.2cm);
\coordinate (v13) at (292.5:5.2cm);
\coordinate (v14) at (315.0:5.2cm);
\coordinate (v15) at (337.5:5.2cm);

\draw[edge] (v0) -- (v1);
\draw[edge] (v0) -- (v2);
\draw[edge] (v0) -- (v3);
\draw[edge] (v0) -- (v4);
\draw[edge] (v0) -- (v5);
\draw[edge] (v0) -- (v8);
\draw[edge] (v0) -- (v10);
\draw[edge] (v0) -- (v12);
\draw[edge] (v0) -- (v15);
\draw[edge] (v1) -- (v2);
\draw[edge] (v1) -- (v3);
\draw[edge] (v1) -- (v4);
\draw[edge] (v1) -- (v5);
\draw[edge] (v1) -- (v9);
\draw[edge] (v1) -- (v11);
\draw[edge] (v1) -- (v13);
\draw[edge] (v1) -- (v14);
\draw[edge] (v2) -- (v3);
\draw[edge] (v2) -- (v6);
\draw[edge] (v2) -- (v7);
\draw[edge] (v2) -- (v8);
\draw[edge] (v2) -- (v10);
\draw[edge] (v2) -- (v13);
\draw[edge] (v2) -- (v14);
\draw[edge] (v3) -- (v6);
\draw[edge] (v3) -- (v7);
\draw[edge] (v3) -- (v9);
\draw[edge] (v3) -- (v11);
\draw[edge] (v3) -- (v12);
\draw[edge] (v3) -- (v15);
\draw[edge] (v4) -- (v5);
\draw[edge] (v4) -- (v6);
\draw[edge] (v4) -- (v7);
\draw[edge] (v4) -- (v8);
\draw[edge] (v4) -- (v11);
\draw[edge] (v4) -- (v12);
\draw[edge] (v4) -- (v14);
\draw[edge] (v5) -- (v6);
\draw[edge] (v5) -- (v7);
\draw[edge] (v5) -- (v9);
\draw[edge] (v5) -- (v10);
\draw[edge] (v5) -- (v13);
\draw[edge] (v5) -- (v15);
\draw[edge] (v6) -- (v7);
\draw[edge] (v6) -- (v9);
\draw[edge] (v6) -- (v10);
\draw[edge] (v6) -- (v12);
\draw[edge] (v6) -- (v14);
\draw[edge] (v7) -- (v8);
\draw[edge] (v7) -- (v11);
\draw[edge] (v7) -- (v13);
\draw[edge] (v7) -- (v15);
\draw[edge] (v8) -- (v9);
\draw[edge] (v8) -- (v10);
\draw[edge] (v8) -- (v11);
\draw[edge] (v8) -- (v12);
\draw[edge] (v8) -- (v13);
\draw[edge] (v9) -- (v10);
\draw[edge] (v9) -- (v11);
\draw[edge] (v9) -- (v12);
\draw[edge] (v9) -- (v13);
\draw[edge] (v10) -- (v11);
\draw[edge] (v10) -- (v14);
\draw[edge] (v10) -- (v15);
\draw[edge] (v11) -- (v14);
\draw[edge] (v11) -- (v15);
\draw[edge] (v12) -- (v13);
\draw[edge] (v12) -- (v14);
\draw[edge] (v12) -- (v15);
\draw[edge] (v13) -- (v14);
\draw[edge] (v13) -- (v15);
\draw[edge] (v14) -- (v15);

\fill (v0) circle (2pt);
\fill (v1) circle (2pt);
\fill (v2) circle (2pt);
\fill (v3) circle (2pt);
\fill (v4) circle (2pt);
\fill (v5) circle (2pt);
\fill (v6) circle (2pt);
\fill (v7) circle (2pt);
\fill (v8) circle (2pt);
\fill (v9) circle (2pt);
\fill (v10) circle (2pt);
\fill (v11) circle (2pt);
\fill (v12) circle (2pt);
\fill (v13) circle (2pt);
\fill (v14) circle (2pt);
\fill (v15) circle (2pt);

\node at (0.0:6.6cm) { $\begin{pmatrix}0&0\\0&0\end{pmatrix}$};
\node at (22.5:6.6cm) { $\begin{pmatrix}0&0\\0&1\end{pmatrix}$};
\node at (45.0:6.6cm) { $\begin{pmatrix}0&0\\1&0\end{pmatrix}$};
\node at (67.5:6.6cm) { $\begin{pmatrix}0&0\\1&1\end{pmatrix}$};
\node at (90.0:6.6cm) { $\begin{pmatrix}0&1\\0&0\end{pmatrix}$};
\node at (112.5:6.6cm) { $\begin{pmatrix}0&1\\0&1\end{pmatrix}$};
\node at (135.0:6.6cm) { $\begin{pmatrix}0&1\\1&0\end{pmatrix}$};
\node at (157.5:6.6cm) { $\begin{pmatrix}0&1\\1&1\end{pmatrix}$};
\node at (180.0:6.6cm) { $\begin{pmatrix}1&0\\0&0\end{pmatrix}$};
\node at (202.5:6.6cm) { $\begin{pmatrix}1&0\\0&1\end{pmatrix}$};
\node at (225.0:6.6cm) { $\begin{pmatrix}1&0\\1&0\end{pmatrix}$};
\node at (247.5:6.6cm) { $\begin{pmatrix}1&0\\1&1\end{pmatrix}$};
\node at (270.0:6.6cm) { $\begin{pmatrix}1&1\\0&0\end{pmatrix}$};
\node at (292.5:6.6cm) { $\begin{pmatrix}1&1\\0&1\end{pmatrix}$};
\node at (315.0:6.6cm) { $\begin{pmatrix}1&1\\1&0\end{pmatrix}$};
\node at (337.5:6.6cm) { $\begin{pmatrix}1&1\\1&1\end{pmatrix}$};
					\end{tikzpicture}%
				}
				\caption{The singular difference graph $\Gamma$ of $M_2(\mathbb{F}_2)$.
					Each vertex is labeled by the corresponding matrix.}
				\label{fig:sdg-2-2}
			\end{figure*}    
		\end{center}	
	\end{example}

	The following proposition provides certain equivalent characterizations of adjacency in 
	a singular difference graph. This follows directly from the definition and standard 
	results from linear algebra. These characterizations will be used repeatedly throughout 
	this paper.
	
	\begin{proposition}\label{prop-1}
		Let $A,B\in M_n(\mathbb{F})$. Then, the following statements are equivalent:
		\begin{enumerate}
			\item $A\sim B$;
			\item $\det(A-B)=0$;
			\item $A-B$ is singular;
			\item $0$ is an eigenvalue of $A-B$;
			\item there exists a nonzero vector $x\in\mathbb{F}^n$ such that $Ax=Bx$.
		\end{enumerate}
	\end{proposition}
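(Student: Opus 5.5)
The plan is to prove the equivalences as a chain, using only the definition of $\Gamma$ and standard facts about square matrices over a field. One subtlety comes first. The relation $A\sim B$ is defined only for distinct vertices, whereas (2)--(5) make sense when $A=B$, and then they all hold since $\det(0)=0$. So I will read the proposition for distinct $A,B$, or equivalently read $A\sim B$ as shorthand for $\det(A-B)=0$, as the sentence just after the definition does. Under this reading, (1)$\Leftrightarrow$(2) is the definition of adjacency in $\Gamma$.

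For (2)$\Leftrightarrow$(3), set $N=A-B\in M_n(\mathbb{F})$. A square matrix over a field is singular, meaning non-invertible, exactly when its determinant is zero. Indeed, if $\det N\neq 0$ then $N^{-1}=(\det N)^{-1}\operatorname{adj}(N)$. Conversely, if $NM=I$ then multiplicativity of the determinant gives $\det N\det M=1$, so $\det N\neq 0$.

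For (3)$\Leftrightarrow$(5), I will use the fact that $N$ is singular if and only if $\ker N\neq\{0\}$. This follows from rank--nullity on the linear map $x\mapsto Nx$ of the finite-dimensional space $\mathbb{F}^n$: an injective endomorphism is surjective, hence invertible. A nonzero $x$ with $Nx=0$ is exactly a nonzero $x$ with $Ax=Bx$, by distributivity.

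For (4)$\Leftrightarrow$(5), recall that $0$ is an eigenvalue of $N$ precisely when there is a nonzero $x$ with $Nx=0\cdot x=0$. Equivalently, $0$ is a root of the characteristic polynomial $\det(xI-N)$, whose value at $0$ is $(-1)^n\det N$. This second description also links (4) directly to (2).

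None of these steps is a real obstacle. The only point needing care is the handling of distinctness in (1) described at the start.
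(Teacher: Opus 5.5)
Your proof is correct and takes the same approach as the paper. The paper gives no detailed argument and only says the equivalences follow from the definition of $\Gamma$ and standard linear algebra; your chain (determinant versus invertibility, nontrivial kernel via rank--nullity, and the eigenvalue and characteristic-polynomial description) fills in exactly those standard facts, and your remark on distinctness is apt, since the paper tacitly adopts the reading $A\sim B \iff \det(A-B)=0$ even though adjacency is only defined for $A\neq B$, while (2)--(5) hold trivially when $A=B$.
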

	
	\section{Properties of singular difference graphs}\label{Sec-Properties}
	
	In this section, we investigate graph-theoretic properties of the singular difference 
	graph $\Gamma$, including its connectedness, diameter, regularity, the Eulerian property, 
	independence number, clique number, and domination number. We begin with a lemma that 
	plays a fundamental role in establishing the connectivity of $\Gamma$.
	
	\begin{lemma}\label{lem-diameter}
		Let $A,B\in M_n(\mathbb{F})$, where $n \geq 2$. Then there exists a matrix
		$C \in M_n(\mathbb{F})$ such that $\det(A-C)=0=\det(B-C)$.
	\end{lemma}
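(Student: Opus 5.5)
A direct construction settles this: we find $C$ agreeing with $A$ on one nonzero vector and with $B$ on another. By Proposition~\ref{prop-1}(5), $\det(A-C)=0$ holds exactly when there is a nonzero $x\in\mathbb{F}^n$ with $Cx=Ax$. Likewise $\det(B-C)=0$ holds when there is a nonzero $y$ with $Cy=By$. Since $n\geq 2$, we may take two linearly independent vectors, say the standard basis vectors $e_1$ and $e_2$. We then prescribe $C$ on them independently.

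Concretely, define $C$ column by column. Its first column is the first column of $A$, so $Ce_1=Ae_1$. Its second column is the second column of $B$, so $Ce_2=Be_2$. The remaining columns (if $n\geq 3$) are arbitrary, for instance zero. Then $(A-C)e_1=0$ with $e_1\neq 0$, so $A-C$ is singular. Similarly $(B-C)e_2=0$, so $B-C$ is singular. Hence $\det(A-C)=0=\det(B-C)$.

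There is no real obstacle. The only point needing care is that we use two distinct coordinates, which is where the hypothesis $n\geq 2$ enters: for $n=1$ we would need $C=A=B$, which fails in general. Note also that the statement does not require $C$ to differ from $A$ and $B$. If distinctness is wanted later for paths in $\Gamma$, one can use the freedom in the remaining columns, or in the choice of vectors, to arrange $C\neq A,B$. That adjustment belongs to the diameter argument, not to this lemma.
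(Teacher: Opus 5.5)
Your proof is correct and is essentially the paper's argument. The paper takes independent vectors $x,y$ with dual vectors $v,u$ and sets $C=Axv^T+Byu^T$; with $x=v=e_1$ and $y=u=e_2$ this is exactly your matrix $Ae_1e_1^T+Be_2e_2^T$. (As an aside, in the diameter application where $A-B$ is invertible, $C\neq A,B$ holds automatically, since $C=A$ would force $\det(B-A)=0$.)
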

	
	\begin{proof}
		Since $n \ge 2$, there exist two linearly independent vectors
		$x,y \in \mathbb{F}^n$. Choose vectors $v, u \in \mathbb{F}^n$ satisfying $v^Tx=1$, 
		$v^Ty=0$, $u^Tx=0$ and $u^Ty=1$.
		Define $C=Axv^T+Byu^T$. Then
		$Cx=Ax(v^Tx)+By(u^Tx)=Ax$
		and
		$Cy=Ax(v^Ty)+By(u^Ty)=By$.
		Therefore, by Proposition \ref{prop-1},
		$\det(A-C)=0=\det(B-C)$, which completes the proof.
	\end{proof}
	
	Using Lemma \ref{lem-diameter}, we establish the connectivity and diameter of $\Gamma$.
	
	\begin{theorem}\label{thm-connected}
		The singular difference graph $\Gamma$ of $ M_n(\mathbb{F})$ is connected and $\operatorname{diam}(\Gamma)=2$.
	\end{theorem}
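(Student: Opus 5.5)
Connectivity and the upper bound $\operatorname{diam}(\Gamma)\le 2$ will come from Lemma \ref{lem-diameter}; the lower bound $\operatorname{diam}(\Gamma)\ge 2$ will come from a single explicit pair of non-adjacent vertices.

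For the upper bound, let $A\neq B$ be vertices of $\Gamma$. If $\det(A-B)=0$, then $A\sim B$ and $d(A,B)=1$. Otherwise, Lemma \ref{lem-diameter} gives $C\in M_n(\mathbb{F})$ with $\det(A-C)=0=\det(B-C)$. Since $\det(A-B)\neq 0$, the matrix $C$ cannot equal both $A$ and $B$.

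The one point needing care is that $C$ must be distinct from $A$ and $B$, so that $A,C,B$ is a genuine path. Suppose $C=A$. Then $\det(B-C)=\det(B-A)\neq 0$, contradicting $\det(B-C)=0$. The same argument rules out $C=B$. Hence $C\notin\{A,B\}$, $A\sim C\sim B$ is a path of length two, and $d(A,B)\le 2$. This shows that $\Gamma$ is connected and $\operatorname{diam}(\Gamma)\le 2$.

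For the lower bound, take $A=I$ and $B=0$. These are distinct, and $\det(I-0)=1\neq 0$, so $I\nsim 0$ and $d(I,0)\ge 2$. Therefore $\operatorname{diam}(\Gamma)=2$.

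There is no real obstacle here; the only subtle step is the distinctness check on $C$ above.
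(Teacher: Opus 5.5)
Your proposal is correct and follows essentially the same argument as the paper. Both use Lemma~\ref{lem-diameter} to get $\operatorname{diam}(\Gamma)\le 2$, and both use the nonadjacent pair $O, I$ to show $\Gamma$ is not complete. Your explicit check that $C\notin\{A,B\}$ is a small detail the paper leaves implicit.
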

	
	\begin{proof}
		Let $A,B\in M_n(\mathbb{F})$.
		If $A\sim B$, then the distance between $A$ and $B$ is $1$.
		Suppose that $A\nsim B$. By Lemma~\ref{lem-diameter}, there exists a matrix
		$C\in M_n(\mathbb{F})$ such that $A\sim C$ and $C\sim B$.
		Thus, there is a path of length $2$ joining $A$ and $B$. Hence every pair of
		vertices is connected by a path of length at most $2$, and therefore $\Gamma$ is
		connected and $\operatorname{diam}(\Gamma)\leq 2$. On the other hand, let $O$ and 
		$I$ denote the zero matrix and the identity matrix in $M_n(\mathbb{F})$, respectively. 
		Since $\det(O-I)=\det(-I)=(-1)^n\neq 0$, it follows that $O\nsim I$. Thus, $\Gamma$ 
		is not complete, and consequently $\operatorname{diam}(\Gamma)\neq 1$. Therefore, 
		$\operatorname{diam}(\Gamma)=2$.
		
	\end{proof}
	
	Next, we show that the singular difference graph $\Gamma$ is regular. Subsequently, we 
	determine its degree explicitly when the underlying field is finite.
	
	\begin{theorem}\label{thm-regular}
		The singular difference graph $\Gamma$ of $ M_n(\mathbb{F})$ is regular.
	\end{theorem}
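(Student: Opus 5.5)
The plan is to show that translations are automorphisms of $\Gamma$, so that $\Gamma$ is vertex-transitive and hence regular. For a fixed matrix $M\in M_n(\mathbb{F})$, consider the translation map $\tau_M: M_n(\mathbb{F})\to M_n(\mathbb{F})$ given by $\tau_M(X)=X+M$. This map is a bijection, with inverse $\tau_{-M}$. For any $X,Y$ we have
\[
\tau_M(X)-\tau_M(Y)=(X+M)-(Y+M)=X-Y .
\]
By Proposition \ref{prop-1}, $X\sim Y$ if and only if $\det(X-Y)=0$, and this holds if and only if $\tau_M(X)\sim\tau_M(Y)$. Distinctness is also preserved, since $\tau_M$ is injective. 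Hence $\tau_M$ is a graph automorphism of $\Gamma$.

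Given any two vertices $A,B$, take $M=B-A$. Then $\tau_M(A)=B$, so the automorphism group acts transitively on $V$. An automorphism restricts to a bijection $N(A)\to N(\tau_M(A))=N(B)$. Therefore $|N(A)|=|N(B)|$, and every vertex has the same degree, possibly an infinite cardinal when $\mathbb{F}$ is infinite. This proves that $\Gamma$ is regular.

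An equivalent and more concrete formulation avoids the language of automorphisms. We have $N(A)=\{A+S : S \text{ singular},\ S\neq O\}$, so $N(A)=A+N(O)$, and translation by $A$ is a bijection between the two sets. Thus $\deg(A)=|N(O)|$, the number of nonzero singular matrices, for every $A$. I would likely present this version, since it also sets up the subsequent degree count over $\mathbb{F}_q$.

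There is no real obstacle here. The only points needing care are that distinct vertices map to distinct vertices, and that in the infinite case "regular" is interpreted as equal cardinalities of neighbourhoods, which the bijection handles directly.
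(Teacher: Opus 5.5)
Your proposal is correct, and the concrete version you say you would present is essentially the paper's proof: the paper also writes $N(A)=\{A+S : S \text{ singular},\ S\neq O\}$ and concludes that $\deg(A)$ is the number of nonzero singular matrices, which does not depend on $A$. The framing with translations $\tau_M$ as automorphisms is a repackaging of the same observation. Your formulation $\deg(A)=|N(O)|$ also handles the infinite-field case a little more cleanly than the paper's ``number of singular matrices minus one'' count.
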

	
	\begin{proof}
		Let $A\in M_n(\mathbb{F})$. By Proposition \ref{prop-1},
		$A\sim B$ if and only if $A-B$ is singular.
		Setting $S=B-A$, we obtain $A\sim B$ if and only if
		$B=A+S$, where $S$ is a singular matrix.
		Thus, $N(A)=\left\{A+S:S\in M_n(\mathbb{F}) \text{ is singular},\,S\neq O\right\}$,
		where $O$ denotes the zero matrix. Consequently,
		$\deg(A)
		=
		\left|\left\{S\in M_n(\mathbb{F}):S\text{ is singular}\right\}\right|-1$.
		Since this quantity is independent of the choice of $A$, every vertex has the same 
		degree. Hence, $\Gamma$ is regular.
	\end{proof}

	We now provide a spectral characterization of the neighbourhood of an invertible matrix. For $A \in M_n(\mathbb{F})$, $\sigma(A)$ denotes the set of all eigenvalues of $A$.
	
	\begin{theorem}\label{thm-invertible}
		Let $A\in M_n(\mathbb{F})$ be invertible. Then
		$N(A)=\{\,B\in M_n(\mathbb{F}) : 1\in\sigma(A^{-1}B)\,\}$.
	\end{theorem}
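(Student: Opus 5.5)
The plan is to reduce adjacency to a determinant identity and then use the eigenvalue characterization from Proposition~\ref{prop-1}. Fix an invertible $A\in M_n(\mathbb{F})$ and an arbitrary $B\in M_n(\mathbb{F})$. The key algebraic step is the factorization
\[
A-B \;=\; A\,(I-A^{-1}B),
\]
which holds because $A$ is invertible. Multiplicativity of the determinant then gives $\det(A-B)=\det(A)\det(I-A^{-1}B)$.

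Since $\det(A)\neq 0$, we get $\det(A-B)=0$ if and only if $\det(I-A^{-1}B)=0$. The latter holds if and only if $1$ is a root of the characteristic polynomial of $A^{-1}B$, that is, if and only if $1\in\sigma(A^{-1}B)$. One can also see this through item (5) of Proposition~\ref{prop-1}: $Ax=Bx$ with $x\neq 0$ is equivalent to $A^{-1}Bx=x$, so $x$ is an eigenvector of $A^{-1}B$ with eigenvalue $1$. Combining this with the equivalence of (1) and (2) in Proposition~\ref{prop-1} yields the stated description of $N(A)$.

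The only point needing care, and the one I expect to be the main obstacle, is the vertex $B=A$ itself. In that case $A^{-1}B=I$ has eigenvalue $1$, yet $A\notin N(A)$ because adjacency is only defined between distinct vertices. I would handle this explicitly: either read the set in the statement as ranging over $B\neq A$, or write the conclusion as $N(A)=\{B\in M_n(\mathbb{F}): B\neq A,\ 1\in\sigma(A^{-1}B)\}$. Apart from this convention, the argument consists only of the determinant factorization above, so no further difficulty is expected.
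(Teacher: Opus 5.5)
Your proof is correct and is essentially the paper's argument. The paper uses item (5) of Proposition~\ref{prop-1} and rewrites $Ax=Bx$ as $A^{-1}Bx=x$, which is exactly your alternative route; your factorization $\det(A-B)=\det(A)\det(I-A^{-1}B)$ is the same manipulation at the level of determinants.

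Your caveat about $B=A$ is well founded and is a point the paper overlooks: item (1) of Proposition~\ref{prop-1} is not equivalent to the other items when $A=B$, so the correct conclusion is the one you wrote, with the condition $B\neq A$ added.
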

	
	\begin{proof}
		Let $B\in M_n(\mathbb{F})$. Since $A$ is invertible,
		\begin{align*}
			B\in N(A)
			&\iff A\sim B\\
			&\iff \exists\,x\in\mathbb{F}^n\setminus\{0\}\text{ such that }Ax=Bx\\
			&\iff \exists\,x\in\mathbb{F}^n\setminus\{0\}\text{ such that }A^{-1}Bx=x\\
			&\iff \exists\,x\in\mathbb{F}^n\setminus\{0\}\text{ such that }A^{-1}Bx=1\cdot x\\
			&\iff 1\in\sigma(A^{-1}B).
		\end{align*}
		Hence, $N(A)=\{\,B\in M_n(\mathbb{F}) : 1\in\sigma(A^{-1}B)\,\}$.
	\end{proof}
	
	\begin{remark}
		The neighbourhoods of the identity matrix $I$ and the zero matrix $O$
		admit simple spectral descriptions.
		
		\begin{enumerate}
			\item[(i)] By Theorem~\ref{thm-invertible}, the neighbourhood of the
			identity matrix is
			$N(I)=\{\,B\in M_n(\mathbb{F}):1\in\sigma(B)\,\}$.
			
			\item[(ii)] By Proposition~\ref{prop-1}(4), the neighbourhood of the
			zero matrix is
			$N(O)=\{\,B\in M_n(\mathbb{F}):0\in\sigma(B)\,\}$.
			Equivalently, $N(O)$ is precisely the set of all nonzero singular
			matrices in $M_n(\mathbb{F})$.
		\end{enumerate}
	\end{remark}
	
	We now restrict our attention to the finite field $\mathbb{F}_q$, where
	$q=p^k$ for some prime $p$ and positive integer $k$. In this setting, the degree of each
	vertex admits a closed-form expression.
	\medskip
	\begin{theorem}\label{Thm-degree of vertex}
		Let $\Gamma$ be the singular difference graph of
		$M_n(\mathbb{F}_q)$. Then, for every
		$A\in M_n(\mathbb{F}_q)$,
		$\deg(A)
		=
		q^{n^2}
		-
		\displaystyle	\prod_{j=0}^{n-1}(q^n-q^j)
		-1$.
	\end{theorem}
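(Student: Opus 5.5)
By the proof of Theorem~\ref{thm-regular}, for every $A\in M_n(\mathbb{F}_q)$ we have
\[
\deg(A)=\bigl|\{S\in M_n(\mathbb{F}_q): S \text{ is singular}\}\bigr|-1,
\]
where the $-1$ removes the zero matrix $S=O$, which would correspond to $B=A$. It therefore suffices to count the singular matrices in $M_n(\mathbb{F}_q)$. I will do this by complement. The total number of matrices is $q^{n^2}$, so the number of singular matrices equals $q^{n^2}-|GL_n(\mathbb{F}_q)|$. The whole proof reduces to establishing the standard formula
\[
|GL_n(\mathbb{F}_q)|=\prod_{j=0}^{n-1}(q^n-q^j).
\]

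To count invertible matrices I will build them column by column. A matrix is invertible if and only if its columns $c_1,\dots,c_n$ are linearly independent in $\mathbb{F}_q^n$. I argue inductively. Suppose $c_1,\dots,c_j$ have already been chosen linearly independent. Then $c_{j+1}$ must avoid their span, which is a $j$-dimensional subspace containing exactly $q^j$ vectors. Hence there are $q^n-q^j$ admissible choices for $c_{j+1}$, and this count does not depend on which $c_1,\dots,c_j$ were chosen. The case $j=0$ is included: $c_1$ must be nonzero, giving $q^n-1$ choices. Multiplying the counts over $j=0,\dots,n-1$ gives the product formula.

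Combining these steps gives
\[
\deg(A)=q^{n^2}-\prod_{j=0}^{n-1}(q^n-q^j)-1
\]
for every $A$.

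The only step needing care is the inductive count. One must note that the span of $j$ independent vectors has exactly $q^j$ elements, because a $j$-dimensional space over $\mathbb{F}_q$ is isomorphic to $\mathbb{F}_q^j$. One must also note that linear independence of the full list is equivalent to each new column lying outside the span of the previous ones. Both facts are standard, and everything else follows directly from Theorem~\ref{thm-regular}.
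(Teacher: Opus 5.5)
Your proposal is correct and follows the paper's proof: reduce to counting singular matrices via the proof of Theorem~\ref{thm-regular}, then subtract the number of invertible matrices from $q^{n^2}$. The only difference is that the paper cites $|GL_n(\mathbb{F}_q)|=\prod_{j=0}^{n-1}(q^n-q^j)$ as known, whereas you derive it with the standard column-by-column count, and that derivation is sound.
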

	
	\begin{proof}
		The total number of matrices in $M_n(\mathbb{F}_q)$ is $q^{n^2}$, while the number of
		invertible matrices is
		$\displaystyle \prod_{j=0}^{n-1}(q^n-q^j)$.
		Hence, the number of singular matrices equals
		$q^{n^2}
		-
		\displaystyle \prod_{j=0}^{n-1}(q^n-q^j)$.
		By the proof of Theorem \ref{thm-regular}, the degree of $A$ is one less than the 
		number of singular matrices in $M_n(\mathbb{F}_q)$. Therefore,
		$\deg(A)
		=
		q^{n^2}
		-
		\displaystyle \prod_{j=0}^{n-1}(q^n-q^j)-1$.
	\end{proof}
	
	We now characterize the Eulerian property of the singular difference graph
	$\Gamma$ of $M_n(\mathbb{F}_q)$ using Theorem \ref{Thm-degree of vertex}.
	
	\begin{theorem}
		Let $\Gamma$ be the singular difference graph of
		$M_n(\mathbb{F}_q)$. Then
		$\Gamma$ is Eulerian if and only if $q$ is odd.
	\end{theorem}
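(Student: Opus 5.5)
By Theorem~\ref{thm-connected}, $\Gamma$ is connected. Hence $\Gamma$ is Eulerian if and only if every vertex has even degree. By Theorem~\ref{thm-regular} all vertices share a common degree, so it suffices to decide the parity of the formula in Theorem~\ref{Thm-degree of vertex},
\[
\deg(A)=q^{n^2}-\prod_{j=0}^{n-1}(q^n-q^j)-1 .
\]

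The plan is a parity computation. We split according to whether $q$ is even or odd, and use $n\ge 2$ throughout.

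\emph{Case $q$ even.} Then $q=2^k$ and $q^{n^2}$ is even. Every factor $q^n-q^j$ with $j\ge 1$ is a difference of two even numbers, so it is even. Since $n\ge 2$, there is at least one such factor, namely $j=1$, and hence the product is even. Therefore $\deg(A)$ is even minus even minus one, which is odd. So $\Gamma$ is not Eulerian.

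\emph{Case $q$ odd.} Then $q^{n^2}$ is odd. Each factor $q^n-q^j$ with $j<n$ is a difference of two odd numbers, so it is even, and the product is even. Therefore $\deg(A)$ is odd minus even minus one, which is even. Every vertex thus has even degree, and $\Gamma$ is Eulerian.

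Combining the two cases gives the equivalence. There is no real obstacle. The only delicate point is the even case, where one must note that the product contains a factor with $j\ge 1$. This is exactly where the hypothesis $n\ge 2$ enters: for $n=1$ the product would be $q-1$, which is odd.
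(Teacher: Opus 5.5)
Your proposal is correct and follows the paper's proof essentially verbatim: connectivity plus Euler's criterion reduces the question to the parity of $q^{n^2}-\prod_{j=0}^{n-1}(q^n-q^j)-1$, which is even for odd $q$ and odd for even $q$. Your explicit remark that the even case needs a factor with $j\ge 1$ (hence $n\ge 2$, since the $j=0$ factor $q^n-1$ is odd) makes precise a point the paper uses only implicitly.
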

	
	\begin{proof}
		From Theorem \ref{thm-connected}, the graph $\Gamma$ is connected. Therefore, 
		by Euler's characterization, $\Gamma$ is Eulerian if and only if every
		vertex of $\Gamma$ has an even degree.
		Let $A\in M_n(\mathbb{F}_q)$. By
		Theorem \ref{Thm-degree of vertex},
		$\deg(A)
		=
		q^{n^2}
		-
		\displaystyle \prod_{j=0}^{n-1}(q^n-q^j)
		-1$. Suppose that $q$ is odd. Then $q^{n^2}$ is odd and
		$q^n-1$ is even, and hence
		$\displaystyle \prod_{j=0}^{n-1}(q^n-q^j)
		=
		(q^n-1)\displaystyle \prod_{j=1}^{n-1}(q^n-q^j)$
		is even. It follows that the degree of $A$ is the difference between an odd number and an even number, minus one, and hence is even. Thus every vertex of $\Gamma$ has even degree, and consequently
		$\Gamma$ is Eulerian.
		
		Conversely, suppose that $q$ is even. Then, $q^{n^2}$ is even. Since
		$q^n$ and $q^j$ are even for every $j=1,\ldots,n-1$, each factor
		$q^n-q^j$ is even. Hence
		$\displaystyle \prod_{j=0}^{n-1}(q^n-q^j)$
		is even. Therefore, the degree of $A$ is the difference between an even number and an even number, minus one, and hence is odd. 
		Thus every vertex of $\Gamma$ has odd degree; therefore  $\Gamma$ is 
		not Eulerian. Hence, $\Gamma$ is Eulerian if and only if $q$ is odd.
	\end{proof}
	
	\medskip
	Next, we determine the independence number of the singular
	difference graph $\Gamma$ of $M_n(\mathbb{F}_q)$. We first establish an upper bound for
	$\alpha(\Gamma)$ and then show that this bound is attained by explicitly
	constructing an independent set of $\Gamma$. We begin with the following lemma.
	
	\begin{lemma}\label{Lem-gcd}
		Let $p(x)\in\mathbb{F}[x]$ be a monic polynomial of degree $n$, and let
		$C$ be its companion matrix. If  $q(x)\in\mathbb{F}[x]$ satisfies $\gcd(p(x),q(x))=1$, 
		then $q(C)$ is invertible.
	\end{lemma}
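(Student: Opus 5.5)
The natural route is Bézout's identity in $\mathbb{F}[x]$ combined with the Cayley--Hamilton theorem. Since $\gcd(p(x),q(x))=1$ and $\mathbb{F}[x]$ is a principal ideal domain, there exist polynomials $s(x),t(x)\in\mathbb{F}[x]$ with
\[
s(x)p(x)+t(x)q(x)=1 .
\]
Evaluation at a matrix is a ring homomorphism $\mathbb{F}[x]\to M_n(\mathbb{F})$, because powers of a single matrix commute. Substituting $C$ therefore gives $s(C)p(C)+t(C)q(C)=I$.

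The key step is to show $p(C)=O$. As recalled in the preliminaries, the characteristic polynomial of the companion matrix $C$ is $p(x)$. The Cayley--Hamilton theorem then yields $p(C)=O$, so the identity reduces to $t(C)q(C)=I$.

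Since $t(C)$ and $q(C)$ are both polynomials in $C$, they commute. Hence $q(C)t(C)=I$ as well, so $q(C)$ is invertible with inverse $t(C)$. Over a field, a one-sided inverse of a square matrix is already two-sided, so this commutativity is a convenience rather than a necessity.

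There is no real obstacle. The only point needing care is that we may not assume $p$ is irreducible here, so we cannot invoke the earlier remark that the minimal polynomial equals $p$. Cayley--Hamilton suffices, using only that $p$ is the characteristic polynomial of $C$.

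An alternative is an eigenvalue argument in a splitting field. If $q(C)$ were singular, some eigenvalue $\lambda$ of $C$ (a root of $p$) would satisfy $q(\lambda)=0$, contradicting coprimality. The Bézout argument is cleaner because it avoids field extensions.
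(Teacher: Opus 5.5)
Your proof is correct and follows the paper's route: Bézout's identity $s(x)p(x)+t(x)q(x)=1$, evaluation at $C$, and Cayley--Hamilton to get $p(C)=O$, leaving $t(C)q(C)=I$. Your point that only ``the characteristic polynomial of $C$ is $p(x)$'' is needed, and not irreducibility, is accurate and fits the lemma's hypotheses.
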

	
	\begin{proof}
		Since $\gcd(p(x),q(x))=1$, Bézout's identity yields polynomials
		$a(x),b(x)\in\mathbb{F}[x]$ such that $a(x)p(x)+b(x)q(x)=1$.
		Evaluating this identity at the companion matrix $C$, we obtain
		$a(C)p(C)+b(C)q(C)=I$. Since $p(C)=0$ by the Cayley--Hamilton theorem, $b(C)q(C)=I$. 
		Hence, $q(C)$ is invertible, which completes the proof.
		
	\end{proof}
	
	\begin{theorem}\label{Thm-independence}
		Let $\Gamma$ be the singular difference graph of
		$M_n(\mathbb{F}_q)$. Then $\alpha(\Gamma)=q^n$.
	\end{theorem}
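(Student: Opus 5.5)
The argument has two parts: an upper bound by pigeonhole on first rows, and a matching construction from the companion matrix via Lemma \ref{Lem-gcd}.

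\textbf{Upper bound.} Let $\mathcal{I}$ be an independent set of $\Gamma$. Consider the map $\mathcal{I}\to\mathbb{F}_q^n$ sending a matrix to its first row. Suppose two distinct matrices $A,B\in\mathcal{I}$ had the same first row. Then $A-B$ has zero first row, so $\det(A-B)=0$, and $A\sim B$ by Proposition \ref{prop-1}. This contradicts independence, so the map is injective and $|\mathcal{I}|\le q^n$. Hence $\alpha(\Gamma)\le q^n$.

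\textbf{Construction.} Fix a monic irreducible polynomial $p(x)$ of degree $n$ over $\mathbb{F}_q$; one exists by the preliminaries. Let $C$ be its companion matrix. Define
\[
\mathcal{I}=\{\,f(C): f(x)\in\mathbb{F}_q[x],\ \deg f\le n-1\ \text{or}\ f=0\,\}.
\]
Since $I,C,\ldots,C^{n-1}$ are linearly independent, distinct polynomials of degree at most $n-1$ give distinct matrices. Therefore $|\mathcal{I}|=q^n$.

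Now take distinct $f(C),g(C)\in\mathcal{I}$. Their difference is $h(C)$, where $h=f-g$ is a nonzero polynomial of degree less than $n$. Because $p$ is irreducible of degree $n$, any common factor of $p$ and $h$ would have to be $p$ itself, which cannot divide a nonzero polynomial of smaller degree. Hence $\gcd(p,h)=1$. By Lemma \ref{Lem-gcd}, $h(C)$ is invertible, so $f(C)\nsim g(C)$. Thus $\mathcal{I}$ is independent, and $\alpha(\Gamma)\ge q^n$.

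Neither step is hard. The point needing the most care is the gcd argument, which relies on irreducibility of $p$ together with $\deg h<n$ and $h\neq 0$.
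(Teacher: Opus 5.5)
Your proposal is correct and follows essentially the same route as the paper. You bound $\alpha(\Gamma)\le q^n$ by injectivity of the first-row map, which is the paper's pigeonhole argument. You then attain the bound with $\{f(C):\deg f\le n-1\}$ for the companion matrix $C$ of an irreducible polynomial of degree $n$, using Lemma~\ref{Lem-gcd}, exactly as the paper does, and you justify $\gcd(p,h)=1$ slightly more explicitly than the paper.
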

	
	\begin{proof}
		We first show that $\alpha(\Gamma)\le q^n$.
		There are exactly $q^n$ possible first rows of matrices in
		$M_n(\mathbb{F}_q)$. Hence, by the pigeonhole principle, among any
		$q^n+1$ matrices, there exist two matrices, say $A$ and $B$, having the
		same first row. Consequently, the first row of $A-B$ is the zero vector,
		and therefore $A-B$ is singular. Thus,
		$\det(A-B)=0$,
		which implies that $A\sim B$. Hence no independent set can contain more
		than $q^n$ vertices, and so
		$\alpha(\Gamma)\le q^n$.
		
		We prove the reverse inequality by constructing an independent set of cardinality $q^n$.
		Let
		$p(x)=x^n+a_{n-1}x^{n-1}+\cdots+a_1x+a_0$
		be an irreducible polynomial over $\mathbb{F}_q$, and let $C$ denote its
		companion matrix. Define
		$\mathcal{S}=
		\left\{
		\alpha_0I+\alpha_1C+\cdots+\alpha_{n-1}C^{\,n-1}
		:
		\alpha_i\in\mathbb{F}_q
		\right\}$.
		Since the minimal polynomial of $C$ is $p(x)$, the matrices
		$I,C,\ldots,C^{n-1}$
		are linearly independent over $\mathbb{F}_q$. Hence,
		$|\mathcal{S}|=q^n$.
		Let $A,B\in \mathcal{S}$ with $A\neq B$. Then
		$A-B=q(C)$,
		where $q(x)\in\mathbb{F}_q[x]$ is a nonzero polynomial with
		$\deg q<n$. Since $p(x)$ is irreducible and $\deg q<\deg p$, we have $\gcd(p(x),q(x))=1$. 
		Therefore, by Lemma~\ref{Lem-gcd}, $q(C)$ is invertible. It follows that
		$\det(A-B)\neq 0$,
		and consequently
		$A\nsim B$.
		Thus, $\mathcal{S}$ is an independent set of $\Gamma$, yielding
		$\alpha(\Gamma)\ge |\mathcal{S}|=q^n$.
		Combining the upper and lower bounds, we conclude that
		$\alpha(\Gamma)=q^n$.
	\end{proof}
	
	\begin{remark}
		Theorem \ref{Thm-independence} admits the following extremal interpretation. One can choose at most $q^n$ matrices from $M_n(\mathbb{F}_q)$ such that the difference of any two distinct matrices is invertible. Moreover, the bound $q^n$ is attained by the explicit construction given in the proof.
	\end{remark}
	
	We illustrate the construction in Theorem \ref{Thm-independence} with an example as follows.
	
	\begin{example}
		Let $n=2$ and $q=2$. Then, $\mathbb{F}_q=\mathbb{Z}_2$. Using the
		construction in Theorem~\ref{Thm-independence}, we determine a maximum
		independent set of the singular difference graph associated with
		$M_2(\mathbb{F}_2)$.
		The polynomial
		$p(x)=x^2+x+1$
		is irreducible over $\mathbb{F}_2$. Its companion matrix is
		$C=
		\begin{bmatrix}
			0 & 1 \\
			-1 & -1 
		\end{bmatrix}= \begin{bmatrix}
			0 & 1 \\
			1 & 1 
		\end{bmatrix}$.
		Hence,
		$\mathcal{S}=\left\{ \alpha_0I + \alpha_1C \colon \alpha_0, \alpha_1 \in \mathbb{F}_2
		\right\} =\left\{
		0,\,
		I,\,
		C,\,
		I+C
		\right\}$
		is an independent set of $\Gamma$, where $I+C = \begin{bmatrix}
			1 & 1 \\
			1 & 0 
		\end{bmatrix}$. Since $|\mathcal{S}|=2^2=4=\alpha(\Gamma)$,
		$\mathcal{S}$ is a maximum independent set. This is also evident from Figure \ref{fig:sdg-2-2} of Example \ref{ex-1}.
	\end{example}
	
	We now determine the clique number of the singular difference graph
	$\Gamma$.
	
	\begin{theorem}\label{clique number}
		Let $\Gamma$ be the singular difference graph of
		$M_n(\mathbb{F}_q)$. Then $\omega(\Gamma)=q^{n^2-n}$.
	\end{theorem}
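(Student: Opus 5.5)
Two inequalities are needed. For the lower bound, I would take the subspace $\mathcal{W}$ of all matrices in $M_n(\mathbb{F}_q)$ whose first row is zero. It has dimension $n^2-n$, so $|\mathcal{W}|=q^{n^2-n}$. For distinct $A,B\in\mathcal{W}$, the difference $A-B$ again has zero first row, so it is singular and $A\sim B$. Hence $\mathcal{W}$ is a clique and $\omega(\Gamma)\ge q^{n^2-n}$.

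For the upper bound, I would use the maximum independent set $\mathcal{S}$ from the proof of Theorem~\ref{Thm-independence}, built from the companion matrix $C$ of an irreducible polynomial of degree $n$. The point is that $\mathcal{S}$ is an additive subgroup of $M_n(\mathbb{F}_q)$ of order $q^n$, and every nonzero element of $\mathcal{S}$ is invertible. Its cosets $A+\mathcal{S}$ partition $M_n(\mathbb{F}_q)$ into $q^{n^2}/q^n=q^{n^2-n}$ classes. Each coset is again an independent set: for distinct $A+S_1$ and $A+S_2$ the difference $S_1-S_2$ is a nonzero element of $\mathcal{S}$, hence invertible.

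Now let $\mathcal{C}$ be any clique. A clique and an independent set share at most one vertex, so $\mathcal{C}$ meets each coset in at most one matrix. Therefore $|\mathcal{C}|\le q^{n^2-n}$. Equivalently, two distinct clique members lying in the same coset would have invertible difference, which contradicts adjacency. Combining the two bounds gives $\omega(\Gamma)=q^{n^2-n}$.

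The main obstacle is the upper bound. It needs a partition of the whole vertex set into $q^{n^2-n}$ independent sets, and the subgroup structure of $\mathcal{S}$ together with Lemma~\ref{Lem-gcd} supplies exactly that. Once this is in place, the rest is bookkeeping. I would only double-check that $\mathcal{S}$ is closed under subtraction, which holds because $\mathcal{S}$ is the $\mathbb{F}_q$-span of $I,C,\ldots,C^{n-1}$.
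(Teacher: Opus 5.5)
Your proof is correct and follows the paper's argument. The zero-first-row subspace $\mathcal{W}$ gives the lower bound, and partitioning $M_n(\mathbb{F}_q)$ into the $q^{n^2-n}$ cosets of $\mathcal{S}$, each an independent set, gives the upper bound. The only difference is that the paper counts the cosets by proving $M_n(\mathbb{F}_q)=\mathcal{W}\oplus\mathcal{S}$ from the first rows of $I,C,\ldots,C^{n-1}$, whereas you use the index $q^{n^2}/q^n$ directly, which is slightly more economical.
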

	
	\begin{proof}
		Let $\mathcal{W}$ be the subspace of $M_n(\mathbb{F}_q)$ consisting of all matrices 
		whose first row is a zero vector. Any two distinct
		matrices in $\mathcal{W}$ have a nonzero difference whose first row is
		zero and hence are adjacent in $\Gamma$. Thus, $\mathcal{W}$ is a
		clique in $\Gamma$. Since the remaining $n-1$ rows of a matrix in $\mathcal{W}$ are 
		arbitrary, the number of matrices in $\mathcal{W}$ is $q^{n(n-1)} = q^{n^2 -n}$. 
		Therefore, $\omega(\Gamma) \geq q^{n^2-n}$.
		
		We now prove the reverse inequality. Let $\mathcal{S}$ be the subspace
		defined in the proof of Theorem~\ref{Thm-independence}, namely,
		$\mathcal{S}
		=
		\left\{
		\alpha_0I+\alpha_1C+\cdots+\alpha_{n-1}C^{n-1}
		:
		\alpha_i\in\mathbb{F}_q
		\right\}$,
		where $C$ is the companion matrix of the irreducible polynomial used
		there. By the construction in Theorem~\ref{Thm-independence}, every
		nonzero matrix in $\mathcal{S}$ is invertible.
		Let $e_1, e_2, e_3, \ldots e_{n}$ be the standard basis vectors of ${\mathbb{F}_q}^n$. 
		Then the first rows of $I,C,C^2,\ldots,C^{n-1}$ are
		$e_1^T,e_2^T,\ldots,e_n^T$, respectively. Hence, if
		$K=\alpha_0I+\alpha_1C+\cdots+\alpha_{n-1}C^{n-1}\in\mathcal{S}$,
		then the first row of $K$ is
		$(\alpha_0,\alpha_1,\ldots,\alpha_{n-1})$.
		Consequently, for every $A\in M_n(\mathbb{F}_q)$, there exists
		$K\in\mathcal{S}$ having the same first row as $A$. It follows that
		$A-K\in \mathcal{W}$, and hence
		$M_n(\mathbb{F}_q)=\mathcal{W}+\mathcal{S}$. 
		We now show that the sum is a direct sum. Let
		$A\in \mathcal{W}\cap\mathcal{S}$. Since $A \in \mathcal{W}$, its first row is zero. 
		On the
		other hand, since $A\in\mathcal{S}$, its first row is
		$(\alpha_0,\alpha_1,\ldots,\alpha_{n-1})$ for some
		$\alpha_0,\ldots,\alpha_{n-1}\in\mathbb{F}_q$. Hence
		$\alpha_0=\alpha_1=\cdots=\alpha_{n-1}=0$,
		so $A=O$. Therefore,
		$\mathcal{W}\cap\mathcal{S}=\{O\}$,
		and consequently
		$M_n(\mathbb{F}_q)=W\oplus\mathcal{S}$.
		It follows that the cosets of $\mathcal{S}$ form a partition of
		$M_n(\mathbb{F}_q)$, hence
		$M_n(\mathbb{F}_q)
		=
		\displaystyle	\bigcup_{X\in \mathcal{W}}(X+\mathcal{S})$.
		Since
		$|\mathcal{W}|=q^{n^2-n}$,
		there are exactly $q^{n^2-n}$ such cosets.
		
		We claim that each coset $X+\mathcal{S}$ is an independent set in
		$\Gamma$. Let $A,B\in X+\mathcal{S}$ be distinct. Then
		$A=X+K_1$, $B=X+K_2$
		for some distinct $K_1,K_2\in\mathcal{S}$. Thus,
		$A-B=K_1-K_2$.
		Since $\mathcal{S}$ is a subspace, $K_1-K_2$ is a nonzero element of
		$\mathcal{S}$. By the construction of $\mathcal{S}$, it is invertible.
		Therefore, $\det(A-B)\neq0$,
		and hence $A\nsim B$. Thus, each coset $X+\mathcal{S}$ is an
		independent set.
		We have therefore partitioned the vertex set of $\Gamma$ into
		$q^{n^2-n}$ independent sets. A clique can contain at most one vertex
		from each independent set. Hence,
		$\omega(\Gamma)\leq q^{n^2-n}$.
		Combining this with the lower bound, we obtain
		$\omega(\Gamma)=q^{n^2-n}$.
		
	\end{proof}
	
	We now give an explicit dominating set for the singular
	difference graph $\Gamma$ of $M_n(\mathbb{F}_q)$. Although the exact 
	value of the domination
	number $\gamma(\Gamma)$ is not determined here, the construction below
	provides an upper bound.
	We first recall a notation that is required in the proof. For a matrix
	$A=(a_{ij})\in M_n(\mathbb{F}_q)$, let $C_{ij}$ denote the cofactor
	corresponding to the entry $a_{ij}$, that is,
	$C_{ij}=(-1)^{i+j}\det(A_{ij})$,
	where $A_{ij}$ is the matrix obtained from $A$ by deleting its $i$-th row
	and $j$-th column. Recall that the determinant of $A$ can be expanded
	along its $i$-th row as
	$\displaystyle \det(A)=\sum_{j=1}^n a_{ij}C_{ij}$.
	In particular, when $i=n$, each cofactor $C_{nj}$ depends only on the
	first $n-1$ rows of $A$. We now construct an explicit dominating set.
	
	\begin{theorem}\label{thm-dominating}
		Let
		$\mathcal{D}=
		\{cE_{nj}:c\in\mathbb{F}_q,\ 1\leq j\leq n\}$,
		where $E_{nj}$ denotes the $n \times n$ matrix with $1$ in the $(n,j)$-entry
		and zeros elsewhere. Then $\mathcal{D}$ is a dominating set of $\Gamma$, and
		$|\mathcal{D}|=n(q-1)+1$, where $\Gamma$ is the singular
		difference graph of $M_n(\mathbb{F}_q)$.
	\end{theorem}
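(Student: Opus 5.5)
First I will count $\mathcal{D}$. For each $j$ the matrices $cE_{nj}$ with $c\neq 0$ are $q-1$ distinct nonzero matrices. Matrices for different $j$ are distinct, since their supports are disjoint. All the choices $c=0$ give the zero matrix $O$. Hence $|\mathcal{D}|=n(q-1)+1$.

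For domination, take $A=(a_{ij})\notin\mathcal{D}$. I want some $D\in\mathcal{D}$ with $D\neq A$ and $\det(A-D)=0$. The matrix $A-cE_{nj}$ differs from $A$ only in the $(n,j)$ entry. The cofactors $C_{nj}$ along the last row depend only on the first $n-1$ rows, so they are unchanged. Expanding along the $n$-th row, as recalled before the theorem, gives
\[
\det(A-cE_{nj})=\det(A)-c\,C_{nj}.
\]
I will split into two cases.

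\emph{Case 1: $\det(A)=0$.} Then $A\neq O$, because $O\in\mathcal{D}$. Since $\det(A-O)=0$, we get $A\sim O$.

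\emph{Case 2: $\det(A)\neq 0$.} Then $\det(A)=\sum_j a_{nj}C_{nj}\neq 0$, so some cofactor $C_{nj}$ is nonzero. Put $c=\det(A)\,C_{nj}^{-1}\in\mathbb{F}_q$. Then $\det(A-cE_{nj})=0$. Also $c\neq 0$, and $cE_{nj}\neq A$ because $A$ is invertible and $n\ge 2$ (indeed $A\notin\mathcal{D}$). Hence $A\sim cE_{nj}$.

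In both cases $A$ is adjacent to a vertex of $\mathcal{D}$, so $\mathcal{D}$ is a dominating set. The only step that needs care is justifying the identity $\det(A-cE_{nj})=\det(A)-c\,C_{nj}$ from cofactor invariance. This is immediate because the first $n-1$ rows are untouched. Everything else is routine.
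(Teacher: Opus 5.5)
Your proof is correct and follows essentially the same argument as the paper: a singular $A\notin\mathcal{D}$ is dominated by $O$, and otherwise the last-row cofactor expansion $\det(A-cE_{nj})=\det(A)-c\,C_{nj}$ with $c=\det(A)/C_{nj}$ gives a neighbour in $\mathcal{D}$. The only difference is that you split cases on whether $\det(A)=0$ rather than on the rank of the first $n-1$ rows. This lets you get a nonzero $C_{nj}$ directly from $\det(A)=\sum_j a_{nj}C_{nj}\neq 0$, instead of from the fact that linearly independent rows give a nonzero cofactor vector.
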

	
	\begin{proof}
		Let $A\in M_n(\mathbb{F}_q)\setminus \mathcal{D}$. We consider two cases according 
		to the rank of the first $n-1$ rows of $A$.
		Suppose that the first $n-1$ rows of $A$ are linearly dependent.
		Then $\operatorname{rank}(A)\leq n-1$, and hence $A$ is singular.
		Therefore, $A\sim O$. Since $O\in \mathcal{D}$, the vertex $A$ is adjacent to a vertex
		in $\mathcal{D}$.
		Now suppose that the first $n-1$ rows of $A$ are linearly independent.
		Then the cofactor vector
		$(C_{n1},\ldots,C_{nn})$
		is nonzero. Hence, there exists $j\in\{1,\ldots,n\}$ such that
		$C_{nj}\neq0$. Since only the $(n,j)$-entry changes when
		$cE_{nj}$ is subtracted from $A$, expansion of the determinant along
		the last row gives
		$\det(A-cE_{nj})=\det(A)-cC_{nj}$.
		Choose
		$c= \displaystyle \frac{\det(A)}{C_{nj}}\in\mathbb{F}_q$.
		Then
		$\det(A-cE_{nj})=0$, 
		so $A\sim cE_{nj}$.
		Thus, every vertex in $M_n(\mathbb{F}_q)\setminus \mathcal{D}$ is adjacent to
		a vertex in $\mathcal{D}$. Hence, $\mathcal{D}$ is a dominating set for $\Gamma$.
		Finally, since there are $n$ choices for $j$ and $q-1$ choices for
		$c\in\mathbb{F}_q \setminus \{0\}$, together with $O$, we have
		$|D|=n(q-1)+1$.
	\end{proof}
	
	\begin{remark}
		Theorem~\ref{thm-dominating} provides the upper bound $\gamma(\Gamma)\leq n(q-1)+1$.
		For the singular difference graphs of $M_2(\mathbb{F}_2)$ and
		$M_2(\mathbb{F}_3)$, the domination numbers can be verified by direct
		computation. Specifically,
		$\gamma(\Gamma)=3$ and
		$\gamma(\Gamma)=5$
		respectively. In both cases, the values attain the upper bound
		$n(q-1)+1$.
		However, the construction in
		Theorem~\ref{thm-dominating} does not establish that the bound is
		optimal. Determining the exact value of $\gamma(\Gamma)$ would require
		a matching lower bound showing that every dominating set has at least
		$n(q-1)+1$ vertices. Such a lower bound does not follow directly from
		the structural results obtained in this paper. Moreover, the
		standard graph-theoretic lower bounds, such as the minimum-degree
		bound, do not yield the required lower bound.
		Hence the exact value of $\gamma(\Gamma)$ in general, remains an
		interesting open problem.
	\end{remark}
	

	\bigskip
	\noindent
	{\bf Data availability:} No data were used for the research done in this article.\\
	\noindent
	{\bf Declarations:} The authors declare that there are no conflicts of interest in this work.

	\bibliographystyle{amsplain}

\end{document}